\documentstyle[10pt,amscd,xypic,amssymb]{amsart}

\xyoption{all}
\CompileMatrices

\emergencystretch=2cm
\newcommand{\nc}{\newcommand}

\makeatletter
\@addtoreset{equation}{section}
\makeatother

\newenvironment{proof}{{\noindent \textbf{Proof}\,\,}}{\hspace*{\fill}$\Box$\medskip}

\newtheorem{theorem}[subsection]{Theorem}

\newtheorem{proposition}[subsection]{Proposition}

\theoremstyle{definition}

\theoremstyle{remark}

\nc{\fa}{{\mathfrak{a}}}
\nc{\fb}{{\mathfrak{b}}}
\nc{\fg}{{\mathfrak{g}}}
\nc{\fh}{{\mathfrak{h}}}
\nc{\fj}{{\mathfrak{j}}}
\nc{\fn}{{\mathfrak{n}}}
\nc{\fm}{{\mathfrak{m}}}
\nc{\fu}{{\mathfrak{u}}}
\nc{\fp}{{\mathfrak{p}}}
\nc{\fr}{{\mathfrak{r}}}
\nc{\ft}{{\mathfrak{t}}}
\nc{\fsl}{{\mathfrak{sl}}}
\nc{\fgl}{{\mathfrak{gl}}}
\nc{\hsl}{{\widehat{\mathfrak{sl}}}}
\nc{\hgl}{{\widehat{\mathfrak{gl}}}}
\nc{\hg}{{\widehat{\mathfrak{g}}}}
\nc{\chg}{{\widehat{\mathfrak{g}}}{}^\vee}
\nc{\hn}{{\widehat{\mathfrak{n}}}}
\nc{\chn}{{\widehat{\mathfrak{n}}}{}^\vee}

\nc{\wGL}{{\widehat{GL}^+}}

\nc{\BA}{{\mathbb{A}}}
\nc{\BC}{{\mathbb{C}}}
\nc{\BM}{{\mathbb{M}}}
\nc{\BN}{{\mathbb{N}}}
\nc{\BF}{{\mathbb{F}}}
\nc{\BP}{{\mathbb{P}}}
\nc{\BR}{{\mathbb{R}}}
\nc{\BZ}{{\mathbb{Z}}}

\nc{\C}{{\mathcal{C}}}
\nc{\pC}{{\mathbb{P}\C}}

\nc{\CA}{{\mathcal{A}}}
\nc{\CB}{{\mathcal{B}}}
\nc{\CE}{{\mathcal{E}}}
\nc{\CF}{{\mathcal{F}}}
\nc{\tCF}{{\widetilde{\CF}}}
\nc{\oCF}{{\overline{\CF}}}
\nc{\CG}{{\mathcal{G}}}
\nc{\CL}{{\mathcal{L}}}
\nc{\CM}{{\mathcal{M}}}
\nc{\CH}{{\mathcal{H}}}
\nc{\CN}{{\mathcal{N}}}
\nc{\CO}{{\mathcal{O}}}
\nc{\CP}{{\mathcal{P}}}
\nc{\CR}{{\mathcal{R}}}
\nc{\CQ}{{\mathcal{Q}}}
\nc{\CS}{{\mathcal{S}}}
\nc{\CT}{{\mathcal{T}}}
\nc{\CU}{{\mathcal{U}}}
\nc{\CV}{{\mathcal{V}}}
\nc{\CW}{{\mathcal{W}}}

\nc{\tN}{{\widetilde{\CN}}}
\nc{\pN}{{\BP\widetilde{\CN}}}

\nc{\tT}{{\widetilde{T}}}

\nc{\fC}{{C}}
\nc{\fZ}{{Z}}
\nc{\wfZ}{{\widetilde{\fZ}}}

\nc{\od}{{\overline{d}}}
\nc{\rg}{{\textrm{R}\Gamma}}
\nc{\erg}{{\emph{R}\Gamma}}
\nc{\id}{{\textrm{Id}}}
\nc{\rhom}{{\textrm{RHom}}}

\nc{\tV}{{\widetilde{V}}}

\def\and{\textrm{ }\&\textrm{ }}

\begin{document}

\title[Push-forwards on Projective Towers]{\large{\textbf{Push-forwards on Projective Towers}}}

\author[Andrei Negut]{Andrei Negut}
\address{Harvard University, Department of Mathematics, Cambridge, MA 02138, USA}
\address{Simion Stoilow Institute of Mathematics, Bucharest, Romania}
\email{andrei.negut@@gmail.com}

\maketitle

\begin{abstract} In this paper we derive a simple and useful combinatorial formula for the push-forwards of cohomology classes down projective towers, in terms of the push-forwards down the individual steps in the tower. 
\end{abstract}

\section{Introduction}

\subsection{} Consider a proper map of algebraic varieties:

$$
\pi:X' \longrightarrow X.
$$
Pick any class $c\in A^*(X')$ and call it the \textbf{tautological class} of $\pi$. Relative to this choice, we can define the \textbf{Segre series} of $\pi$:

\begin{equation}
\label{eqn:segre}
s(\pi,u) = \pi_* \left( \frac 1{u - c} \right).
\end{equation}
This series in $u^{-1}$ has coefficients in $A^*(X)$, and it encodes the push-forwards of all powers of the tautological class $c$. \\

The terminology is motivated by the case when $X'=\BP_X \CV$ is the projectivization of a cone on $X$. In this case, we let $c = c_1(\CO(1))$ and the above notion coincides (up to normalization) with the Segre class introduced by Fulton in \cite{Ful}. In the particular case when $\CV$ is a vector bundle, the Segre series equals the inverse of the (properly renormalized) Chern polynomial of $\CV$. \\

\subsection{} The main subject of this paper are \textbf{projective towers}, namely compositions of proper maps of algebraic varieties:

\begin{equation}
\label{eqn:tower}
\pi: X_k \stackrel{\pi^k}\longrightarrow X_{k-1} \stackrel{\pi^{k-1}}\longrightarrow ... \stackrel{\pi^2}\longrightarrow X_1 \stackrel{\pi^1}\longrightarrow X_0.
\end{equation}
As before, pick $c_i \in A^*(X_i)$ and call them the \textbf{tautological classes} of the tower. We want to encode the push-forwards of these tautological classes under $\pi$, and the reasonable way to do this is to define the \textbf{Segre series} of the tower as:

\begin{equation}
\label{eqn:segree} s(\pi,u_1,...,u_k) = \pi_*\left(\frac 1{u_1 - c_1} \cdot ... \cdot \frac 1{u_k - c_k} \right)
\end{equation}
We suppress the obvious pull-back maps to $X_k$, and hope that this will cause no confusion. \\

\subsection{} 
\label{sub:stat}

One of the main technical results of \cite{Yang} involves studying a particular projective tower \eqref{eqn:tower}. One needs to derive a closed formula for the Segre series of the whole tower $\pi$ from the Segre series of the individual maps $\pi^i$. The assumption we make on these individual Segre series is that:

\begin{equation}
\label{eqn:ass}
s(\pi^i,u) = \prod_{m_i} Q_{m_i}(u + m_i^{i-1} c_{i-1} + ... + m_i^{1} c_{1}),
\end{equation}
for some series $Q_{m_i}$ with coefficients in $A^*(X_0)$, where the product goes over finitely many vectors $m_i = (m_i^1,...,m_i^{i-1})$ of integers. This assumption will be motivated in section \ref{sub:vect}, based on the particular example of a tower of projective bundles. Then our main Theorem \ref{thm:main} below implies that:

\begin{equation}
\label{eqn:main}
s(\pi,u_1,...,u_k) = \left[\prod_{i=1}^k \prod_{m_i} Q_{m_i}(u_i + m_i^{i-1} u_{i-1} + ... + m_i^1 u_1) \right]_-
\end{equation}
The notation $[ \dots ]_-$ means that we expand each $Q_{m_i}$ in non-negative powers of $u_{i-1},...,u_1$, and then we only keep the monomials with all negative exponents in the resulting formula. \\

\subsection{} The basic idea, naturally, is to successively push forward the tautological classes from $X_k$ to $X_{k-1}$ to $\dots$ to $X_1$ to $X_0$, and assumption \eqref{eqn:ass} provides the means for this recursion. However, if one carried out this procedure straightforwardly, one would not obtain a closed formula. The reason why formula \eqref{eqn:main} looks so nice is that we are adding terms with non-negative exponents, only to get rid of them when we apply $[\dots]_-$ at the very end. \\

This closed formula is very useful in the papers \cite{Yang} and \cite{Laumon}. In the present note, we will present a baby case of the main technical computation of these papers: we will rederive a closed formula for integrals on the complete flag variety of vector subspaces of a fixed vector space. \\

I would like to thank Mircea Mustata, Aaron Silberstein and Aleksander Tsymbaliuk for useful discussions. \\

\section{Tautological Classes} 
\label{sec:taut}

\subsection{} 
\label{sub:vect}

Consider the special case of \eqref{eqn:tower} where $X_{i} = \BP_{X_{i-1}} \CV_{i}$ for some vector bundle $\CV_i$ of rank $r_i$ on $X_{i-1}$, and $c_i=c_1(\CO_i(1))$ is the first Chern class of the tautological line bundle. It is well-known (\cite{Ful}, Section 3.2) that the individual Segre classes are equal to the inverse Chern classes:

\begin{equation}
\label{eqn:segrechern}
s(\pi^{i},u) = c^{-1}(\CV_i,u) \qquad \textrm{ where} \qquad c(\CV_i,u)= u^{r_i} \cdot \sum_{k=0}^{r_i} u^{-k} c_k(\CV_i).
\end{equation}
The above Chern classes only depend on the class of $\CV_i$ in the $K-$theory of $X_{i-1}$. We will make the following assumption on this class:

$$
[\CV_i] = \sum_{m_i} [V_{m_i}] \otimes [\CO_1(m_i^1)] \otimes ... \otimes [\CO_{i-1}(m_i^{i-1})],
$$
in $K-$theory, where the sum goes over finitely many vectors $m_i = (m_i^1,...,m_i^{i-1})$ of integers and $[V_{m_i}] \in K(X_0)$ are arbitrary classes (we are suppressing the obvious pull-back maps, and hope that this will cause no confusion). In other words, we assume that in $K-$theory each $\CV_i$ is constructed by twisting bundles on the lower steps in the tower by various tautological line bundles. Then the Whitney sum formula tells us that:
$$
s(\CV_i,u) = \prod_{m_i} s(V_{m_i}, u + m_i^{i-1} c_{i-1} + ... + m_i^1 c_1),
$$
where the Segre series $s(V_{m_i},u)$ now have coefficients in $A^*(X_0)$. This setup justifies our assumption \eqref{eqn:ass}. \\

\subsection{} 
\label{sub:taut}

Let us now go to a general projective tower \eqref{eqn:tower} that satisfies assumption \eqref{eqn:ass}. Along with the variable $u_i$, for each $i \in \{1,...,k\}$ pick an extra set of variables $A_i$. Then our main result is the following theorem: \\

\begin{theorem} 
\label{thm:main}

We have the following relation:

$$
\pi_*\prod_{i=1}^k \left(\frac 1{u_i-c_i} \prod_{u \in A_i} \frac 1{u-c_i}\right) =  
$$

\begin{equation}
\label{eqn:taut}
=\left[\prod_{i=1}^k \prod_{m_i} Q_{m_i}(u_i + m_i^{i-1} u_{i-1} + ... + m_i^1 u_1) \prod_{u \in A_i} \frac 1{u-u_i} \right]_- 
\end{equation}
where we expand each $Q_{m_i}$ in non-negative powers of $u_{i-1},...,u_1$, and each $(u-u_i)^{-1}$ in non-negative powers of $u_i$. The notation $[\dots]_-$ means that we only keep the terms for which all the $u_i$'s and $u$'s have negative exponents. \\

\end{theorem}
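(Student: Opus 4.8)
The plan is to induct on the height $k$ of the tower, pushing forward one step at a time starting from the top. The base case $k=1$ is essentially the hypothesis \eqref{eqn:ass}: applying $\pi^1_*$ to $\frac{1}{u_1-c_1}\prod_{u\in A_1}\frac{1}{u-c_1}$ requires expanding the extra factors $\frac{1}{u-c_1}$ in non-negative powers of $c_1$, so that the whole bracket becomes a power series in $c_1$ whose coefficients live in $A^*(X_0)$; then $\pi^1_*$ acts coefficient-by-coefficient via $s(\pi^1,u_1)=\pi^1_*\frac{1}{u_1-c_1}$, and the key identity $\pi^1_*(c_1^j)=[\text{coeff extraction from }s(\pi^1,u_1)]$ lets us repackage the answer as $s(\pi^1,u_1)\prod_{u\in A_1}\frac{1}{u-u_1}$, restricted to negative exponents in $u_1$ and in the $u$'s. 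That is exactly the right-hand side of \eqref{eqn:taut} for $k=1$.

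For the inductive step, I would first push forward along $\pi^k$ alone, treating $c_1,\dots,c_{k-1}$ as scalars. The factor depending on $c_k$ is $\frac{1}{u_k-c_k}\prod_{u\in A_k}\frac{1}{u-c_k}$; expanding the $\frac{1}{u-c_k}$ in non-negative powers of $c_k$ and applying $\pi^k_*$ using hypothesis \eqref{eqn:ass} produces, after the same repackaging as in the base case, the factor $\prod_{m_k}Q_{m_k}(u_k+m_k^{k-1}c_{k-1}+\dots+m_k^1 c_1)\prod_{u\in A_k}\frac{1}{u-u_k}$, subject to keeping only negative exponents of $u_k$ and of the $u\in A_k$. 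The crucial observation is that the newly produced factor $\prod_{m_k}Q_{m_k}(u_k+\dots)$ depends on $c_{k-1},\dots,c_1$ through non-negative powers only (this is how the $Q$'s are expanded), so it can be absorbed into the "extra variables" formalism at the next level: the monomials $c_{k-1}^{a},\dots,c_1^{a}$ appearing there play exactly the role that the $A_i$ variables play, i.e. they are factors whose $c_i$-expansion is already in non-negative powers. Thus applying the inductive hypothesis to the remaining tower $X_{k-1}\to\cdots\to X_0$ — with the sets $A_i$ for $i\le k-1$ enlarged formally to account for these new monomials — yields precisely \eqref{eqn:taut}.

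The one genuine subtlety, and the step I expect to be the main obstacle, is bookkeeping the interaction between the $[\cdots]_-$ truncation at different levels: when we push forward along $\pi^k$ we only keep negative powers of $u_k$, but the factor $\prod_{m_k}Q_{m_k}(u_k+\dots)$ that survives still contains all non-negative powers of $u_{k-1},\dots,u_1$, and these must be allowed to interact with the factors coming from lower steps before the final truncation in $u_{k-1},\dots,u_1$ is taken. In other words, one cannot truncate the lower-$u$ variables prematurely. The clean way to handle this is to prove a slightly stronger statement by induction: that the full push-forward $\pi^{\ge i}_*$ down to $X_{i-1}$ equals the bracketed product with $[\cdots]_-$ applied only to $u_k,\dots,u_i$, leaving $c_{i-1},\dots,c_1$ as honest classes appearing with all (non-negative) powers. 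With that formulation the absorption described above is bookkeeping rather than a new idea, and the final $[\cdots]_-$ in all variables appears only at the last stage $i=1$. I would also need to check the elementary fact that expanding $Q_{m_i}(u_i+m_i^{i-1}u_{i-1}+\dots)$ in non-negative powers of the lower $u$'s commutes with the coefficient-extraction used when pushing forward the next step; this follows because each such coefficient is a polynomial in the lower $u$'s, so only finitely many terms of each $Q$ contribute to a given monomial.
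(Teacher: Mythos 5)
Your proposal is correct and takes essentially the same route as the paper: the paper's proof is exactly the telescoping chain $Z_k = Z_{k-1} = \dots = Z_0$, where each intermediate $Z_j$ is the ``stronger statement'' you identify, namely the push-forward with $[\dots]_-$ enforced only for the already-processed variables $u_{j+1},\dots,u_k$ while $c_1,\dots,c_j$ remain honest classes appearing in non-negative powers. The single-step identity you describe as ``repackaging'' via coefficient extraction from $s(\pi^j,u_j)$ is carried out in the paper by the explicit monomial/binomial computation with $Q_{m_i}(u)=u^{\alpha_{m_i}}$ and the substitution $\gamma=\beta_j+\sum\beta_u+\sum\beta_{m_i}$, which is the same calculation.
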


Relation \eqref{eqn:main} is simply the case when all the $A_i$ are empty. Though the difference between \eqref{eqn:main} and \eqref{eqn:taut} is a purely formal manipulation of series, we are working with this more general format for the purposes of \cite{Yang}. \\

\begin{proof} For each $i$ between $0$ and $k$, let us define the quantity:

$$
Z_j = \pi^1_{*}...\pi^j_{*}\left[\prod_{i=1}^j \left(\frac 1{u_i-c_i} \prod_{u \in A_i} \frac 1{u-c_i}\right) \right. \cdot 
$$

$$
\cdot \left. \prod_{i=j+1}^k \prod_{m_i} Q_{m_i}(u_i + m_i^{i-1} u_{i-1} + ... + m_i^{j+1} u_{j+1} + m_i^j c_j + ... + m_i^1 c_1) \prod_{u \in A_i} \frac 1{u-u_i} \right]_-
$$
It is easy to see that $Z_k$ is the LHS and $Z_0$ is the RHS of \eqref{eqn:taut}. Therefore, to complete the proof of our theorem, we need to show that $Z_j = Z_{j-1}$, or in other words that:

$$
\pi^j_{*}\left[\left(\frac 1{u_j-c_j} \prod_{u \in A_j} \frac 1{u-c_j}\right) \cdot \prod_{i=j+1}^k \prod_{m_i} Q_{m_i}(u_i +  ... + m_i^j c_j + ... m_i^1 c_1) \right]_- = 
$$ 

\begin{equation}
\label{eqn:desire}
=\left[\prod_{u \in A_j} \frac 1{u-u_j}  \prod_{i=j}^k \prod_{m_i} Q_{m_i}(u_i + ... + m_i^j u_j + ... + m_i^1 c_1) \right]_-
\end{equation}
To prove this relation, it is enough to assume $Q_{m_i}(u) = u^{\alpha_{m_i}}$ and then the LHS becomes

$$
\pi^j_{*}\left[\sum_{\beta_j, \beta_u, \beta_{m_i}} c_j^{\beta_j + \sum \beta_u + \sum \beta_{m_i}} u_j^{-\beta_j-1} \prod_{u \in A_j} u^{-\beta_u-1} \prod_{i>j}^{m_i} (m_i^j)^{\beta_{m_i}} (u_i+...)^{\alpha_{m_i}-\beta_{m_i}} {\alpha_{m_i} \choose \beta_{m_i}}\right]_-
$$
where all the $\beta$'s range over the non-negative integers, $u$ ranges over $A_j$ and $i$ ranges over $\{j+1,...,k\}$. Now if we denote by $\gamma$ the exponent of $c_j$ and solve for $\beta_j$, the above becomes:

$$
\pi^j_{*}\left[\sum_{\gamma,\beta_u,\beta_{m_i}} c_j^\gamma u_j^{-\gamma-1} \prod_{u \in A_j} \left( u_j^{\beta_u} u^{-\beta_u-1}\right) \prod_{i>j}^{m_i} (m_i^j u_j)^{\beta_{m_i}} (u_i+...)^{\alpha_{m_i}-\beta_{m_i}} {\alpha_{m_i} \choose \beta_{m_i}} \right]_-
$$
The condition that $\beta_j \geq 0$, which was lost when we replaced it by the variable $\gamma$, is recovered by the condition $[\dots]_-$. Since $\gamma$ and the $\beta$'s sum independently, the above equals:

$$
\left[ \left( \pi_{j*} \frac 1{u_j-c_j} \right) \prod_{u \in A_j} \frac 1{u-u_j}  \prod_{i=j+1}^k \prod_{m_i} Q_{m_i}(u_i + ... + m_i^j u_j + ... + m_i^1 c_1) \right]_-
$$
Then if we replace $\pi_{j*} (u_j-c_j)^{-1}$ by \eqref{eqn:ass}, the above yields the RHS of \eqref{eqn:desire}, thus completing the proof. \\

\end{proof}

\section{A Basic Example}
\label{sec:flag}

Theorem \eqref{thm:main} works equally well if we replace Chow rings by cohomology rings. For a simple example, let us consider the variety $F$ of complete flags in $\BC^{k+1}$:

\begin{equation}
\label{eqn:flag}
V_1 \subset ... \subset V_{k} \subset \BC^{k+1},
\end{equation}
where $V_i$ is an $i-$dimensional subspace. On $F$, we will consider the universal vector bundle $\CV_i$ whose fiber over \eqref{eqn:flag} is $V_i$, and also the tautological line bundle: 

\begin{equation}
\label{eqn:radu}
\CO_i(1) = \CV_{k+2-i}/\CV_{k+1-i}.
\end{equation}
It is well known that $c_i=c_1(\CO_i(1))$ as $i \in \{1, ... , k\}$ generate the cohomology of $F$. Then we have the following result: \\

\begin{proposition} 
\label{prop:flag}

The following identity tells us how to integrate any cohomology class on $F$:

$$
\int_F \frac 1{(u_1 - c_1) ... (u_{k} - c_{k})} = (u_1...u_k)^{-k-1} \prod_{1\leq i < j \leq k}  (u_j-u_i) .
$$
$$$$
\end{proposition}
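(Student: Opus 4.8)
The plan is to realize the complete flag variety $F$ as a projective tower and apply Theorem \ref{thm:main}. First I would build the tower: set $X_0 = \pt$, and for $i$ from $1$ to $k$ let $X_i$ parametrize the partial flags $V_{k+1-i} \subset \dots \subset V_k \subset \BC^{k+1}$; equivalently, work from the top down so that $X_i = \BP_{X_{i-1}}(\BC^{k+1}/\CV_{k+2-i})$, a projective bundle of fiber dimension $k+1-i$, and $\CO_i(1)$ is the tautological quotient line bundle \eqref{eqn:radu}. Thus $X_k = F$ and $\pi : F \to \pt$ is a projective tower in the sense of \eqref{eqn:tower}, with tautological classes $c_i = c_1(\CO_i(1))$. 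Integration over $F$ is exactly $\pi_*$.

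Next I would compute the individual Segre series $s(\pi^i, u)$ and check that it has the shape required by \eqref{eqn:ass}. By \eqref{eqn:segrechern} it equals $c^{-1}(\CW_i, u)$ where $\CW_i = \BC^{k+1}/\CV_{k+2-i}$. In $K$-theory one has $[\BC^{k+1}] = [\CV_{k+2-i}] + [\CW_i]$, and $\CV_{k+2-i}$ is filtered with associated graded $\CO_k(-1) \oplus \CO_{k-1}(-1) \oplus \dots$; more to the point, on $X_{i-1}$ the bundle $\CV_{k+2-i}$ has a filtration whose graded pieces are the line bundles $\CO_j(1)^\vee$ for the already-constructed steps $j < i$. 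Hence $[\CW_i] = (k+1)[\CO] - \sum_{j<i}[\CO_j(1)^\vee]$ in $K(X_{i-1})$, and the Whitney/Segre multiplicativity from section \ref{sub:vect} gives
$$
s(\pi^i, u) = u^{-k-1} \prod_{j=1}^{i-1}(u - c_j),
$$
i.e. assumption \eqref{eqn:ass} holds with one factor $Q(u) = u^{-k-1}$ (the vector $m = (0,\dots,0)$, coefficient in $A^*(\pt)$) and $i-1$ factors $Q(u) = u$ (the vectors $m = -e_j$ for $j < i$).

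Then I would simply substitute into formula \eqref{eqn:main}. Each step $i$ contributes $Q_0(u_i) \prod_{j<i} Q_{-e_j}(u_i - u_j) = u_i^{-k-1}\prod_{j<i}(u_i - u_j)$, so the bracketed product over all $i$ is
$$
\prod_{i=1}^k u_i^{-k-1} \prod_{1 \le j < i \le k}(u_i - u_j) = (u_1 \cdots u_k)^{-k-1} \prod_{1 \le i < j \le k}(u_j - u_i),
$$
and one must take $[\,\cdot\,]_-$. The point is that this expression is already a Laurent polynomial in which every monomial has all exponents negative: expanding $\prod_{j<i}(u_i - u_j)$ produces monomials $\pm \prod u_i^{a_i}$ with $0 \le a_i \le k-1 < k+1$, so after multiplying by $(u_1\cdots u_k)^{-k-1}$ every exponent is at most $-2$, hence negative. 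Therefore $[\,\cdot\,]_-$ acts as the identity and the claimed identity follows.

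The main obstacle is the verification that the individual Segre series genuinely have the form \eqref{eqn:ass} — that is, correctly identifying the $K$-theory class of the relevant bundle $\BC^{k+1}/\CV_{k+2-i}$ on $X_{i-1}$ in terms of the tautological line bundles of the lower steps, including getting the signs/twists right so that the factors come out as $(u - c_j)$ rather than $(u + c_j)$ or similar. Once the tower and \eqref{eqn:ass} are in place, the rest is a bookkeeping check that $[\,\cdot\,]_-$ is vacuous, which is the easy part. A minor alternative worth noting: one could instead build the tower bottom-up using $\CV_i$ directly, but then the fibers are projectivizations of the sub-bundles and the indexing of the $u$-variables is slightly less transparent; the top-down version aligns the variable $u_i$ with $\CO_i(1)$ as in the statement.
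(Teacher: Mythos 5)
Your overall strategy is the same as the paper's: realize $F$ as a projective tower over a point through the same intermediate partial flag varieties, compute the one-step Segre series, and feed them into \eqref{eqn:main}; the final substitution and the check that $[\dots]_-$ acts as the identity are correct. However, the step you yourself single out as the crux --- identifying the projective bundle at each stage and its $K$-theory class --- is wrong as written, and your correct final answer comes from two errors cancelling. Passing from $X_{i-1}$ to $X_i$ means choosing a hyperplane $V_{k+1-i}$ inside $V_{k+2-i}$, so the fiber is $\BP(V_{k+2-i}^\vee)$, of dimension $k+1-i$. Your bundle $\BC^{k+1}/\CV_{k+2-i}$ has rank $i-1$, so $\BP_{X_{i-1}}(\BC^{k+1}/\CV_{k+2-i})$ has fiber dimension $i-2$ (it is empty for $i=1$) and is not $X_i$; the correct identification, as in the paper, is $X_i = \BP_{X_{i-1}}(\CV_{k+2-i}^\vee)$.

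Likewise, on $X_{i-1}$ it is the quotient $\BC^{k+1}/\CV_{k+2-i}$, not the sub-bundle $\CV_{k+2-i}$, that is filtered with graded pieces among the tautological line bundles of the earlier steps: its $K$-theory class is $\sum_{j<i}[\CO_j(1)]$, whence $[\CV_{k+2-i}^\vee] = (k+1)[\CO] - \sum_{j<i}[\CO_j(-1)]$. The class you write down, $(k+1)[\CO]-\sum_{j<i}[\CO_j(1)^\vee]$, is exactly $[\CV_{k+2-i}^\vee]$ --- the right class for the right bundle, but not the class of the bundle $\CW_i = \BC^{k+1}/\CV_{k+2-i}$ to which you attach it; the inverse Chern polynomial of $\CW_i$ would instead be $\prod_{j<i}(u+c_j)^{-1}$, with no $u^{-k-1}$, and would not yield the Proposition. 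Once these identifications are repaired, your formula $s(\pi^i,u) = u^{-k-1}\prod_{j<i}(u-c_j)$, the matching with \eqref{eqn:ass}, and the observation that every monomial of $(u_1\cdots u_k)^{-k-1}\prod_{i<j}(u_j-u_i)$ already has all exponents negative (so that $[\dots]_-$ is vacuous) coincide with the paper's argument.
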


\begin{proof} If we let $F_i$ parametrize flags:

$$
V_{k+1-i} \subset ... \subset V_{k} \subset \BC^{k+1},
$$
where each $V_j$ still has dimension $j$, then $F_0 = \textrm{pt}$ and $F_{k} = F$. All these spaces fit into a projective tower:

$$
\pi: F_{k} \stackrel{\pi^{k}}\longrightarrow F_{k-1} \stackrel{\pi^{k-1}}\longrightarrow ... \stackrel{\pi^2}\longrightarrow F_1 \stackrel{\pi^1}\longrightarrow F_0 = \textrm{pt}.
$$
It is easy to see that $F_{i} = \BP_{F_{i-1}} (\CV_{k+2-i}^\vee)$, so this realizes the flag variety as a tower of projective bundles over the point. It's easy to see that $\CO_i(1)$ of this tower are precisely the line bundles \eqref{eqn:radu}, and therefore we have the following equality in the Grothendieck group of $F_i$:

$$
[\CV_{k+2-i}^\vee] = [\CO^{k+1}] - [\CO_1(-1)] - ... - [\CO_{i-1}(-1)],
$$
By the Whitney sum formula and \eqref{eqn:segrechern}, one therefore has:

$$
s(\pi^i,u) = \frac {(u-c_1)...(u-c_{i-1})}{u^{k+1}},
$$
Then \eqref{eqn:main} implies the desired result. We do not need the $[\dots]_-$ anymore, because all terms only consist of negative monomials already. \\

\end{proof}

\end{document}